\documentclass[11pt,reqno]{amsart}
\usepackage{amscd,amsmath,amsopn,amssymb,amsthm,multicol}
\usepackage{tikz,subdepth,anysize,verbatim,ifthen,xargs,colortbl,float}
\usepackage{longtable,mathtools}
\usepackage[english]{babel}
\usepackage[utf8]{inputenc}
\everymath=\expandafter{\the\everymath\displaystyle}
\usetikzlibrary{decorations.markings,arrows,arrows.meta,bending,calc}
\tikzset{>={Stealth[scale=1.5, bend]}}
\textwidth 180mm
\textheight 257mm

\setlength{\topmargin}{-0.7cm}
\setlength{\oddsidemargin}{-1.05cm}
\setlength{\evensidemargin}{-1.05cm}
\theoremstyle{plain}
\newtheorem{theorem}{Theorem}
\newtheorem{prop}{Proposition}

\newtheorem{cor}{Corollary}
\newtheorem{rk}{Remark}

\renewcommand\a{\alpha}
\renewcommand\b{\beta}
\newcommand\com[1]{}

\newcommand\op[1]{\mathop{\rm #1}\nolimits}
\newcommand\p{\partial}

\newcommand\R{{\mathbb R}}

\arrayrulecolor{gray!50}

\begin{document}

\title{On fractional-linear integrals of geodesics on surfaces}

\author{Boris Kruglikov}
\address{Department of Mathematics and Statistics, UiT the Arctic University of Norway, Troms\o\ 9037, Norway.
\ E-mail: {\tt boris.kruglikov@uit.no}. }

 \begin{abstract}
In this note we give a criterion for the existence of a fractional-linear integral for a geodesic flow on
a Riemannian surface and explain that modulo M\"obius transformations the moduli space of such local integrals
(if nonempty) is either the two-dimensional projective plane or a finite number of points. We will also consider 
explicit examples and discuss a relation of such rational integrals to Killing vectors. 
 \end{abstract}

\maketitle

\section{Introduction}\label{S1}

Integrability of geodesics of a Riemannian metric is a classical problem in mechanics. 
In this paper we will discuss the smallest non-trivial dimension two. 
(For simplicity we will deal with Riemannian metric but all our results extend to the Lorentzian case.)
For integrability of a metric on a surface $M$, it is enough to find only one integral independent of the Hamiltonian of the system.

Globally, there are obstructions for integrability of geodesic flows, namely the genus of $M$ should not exceed one.
Topological classification of integrable Hamiltonian systems with two degrees of freedom is a remarkable
achievement of A.\,T.\ Fomenko and his school, see \cite{BF,BFM} and references therein. This includes 
many classical geodesic flows on two-dimensional sphere and torus.

Locally, any Riemannian metric $g$ is integrable. However the requirment for the integral to be algebraic in momenta imposes
obstructions, see \cite{KM}. For instance, if such an integral $F$ is linear in momenta (Killing vector) then $(M,g)$ 
is a surface of revolution, and if $F$ is quadratic in momenta then $(M,g)$ is Liouville; see \cite{Kr1} for an invariant criterion 
of the existence of such integrals.
Polynomial in momenta integrals of degree $d$ are also called Killing $d$-tensors, they are a classical subject of investigations.

Recently, rational integrals came into a focus of investigation in Hamiltonian mechanics. They appeared in phyiscs
in connection to general relativity \cite{Co} and they were also studied in relation to classical mechanics \cite{Ko}
and the theory of webs \cite{AA}. In the latter work it is shown that the space $\mathcal{F}(g)$ of 
fractional-linear integrals, if non-trivial, 
has dimension either 5 or 3. We observe that the M\"obius group $\op{PSL}(2,\R)$ naturally acts on this space,
the quotient $\mathcal{F}(g)/\op{PSL}(2,\R)$ is well-defined and is either $\R\mathbb{P}^2$ or a finite number of points,
no more than 6. Our proof is based on the notion of relative Killing vector discussed in \cite{Kr2}.

We will also give a criterion for the existence of fractional-linear integrals, similar to an invariant criterion 
for the existence of a Killing 2-tensor from \cite{Kr1}. We apply it to surfaces of revolutions.
Then we address the question whether a given fractional-linear integral is
irreducible. Examples of such integrals are not many, and we will discuss those that appeared in \cite{AS}.
Our observation is that though a given integral may not be a ratio of two Killing vectors, it may be a ratio of 
more complicated Killing tensors. However, generically this does not happen and such a fractional-linear integral
will be genuinely irreducible.

All consideration in this paper are local (the global case is briefly discussed in a special remark), and
in what follows we may assume that $M$ is a two-dimensional disc $D^2$. 
Metric $g$ will be assumed analytic (however for some results smoothness is sufficient).
A part of computations was done using DifferentialGeometry package of Maple; we do not include
long formulae, but they can be accessed through the ancillary files in version 1 of the arXiv posting of this article.

\section{Relative Killing vectors}\label{S2}

With a Riemannian (or pseudo-Riemannian) metric $g=g_{ij}dx^idx^j$ on a surface $M$ 
we associate the Hamiltonian $H(x,p)=\tfrac12g^{ij}(x)p_ip_j$, $(g^{ij})=(g_{ij})^{-1}$,
on $T^*M$ equipped with the canonical symplectic structure $\omega=dx^i\wedge dp_i$. 
This defines the Hamiltonian vector field $X_H=\omega^{-1}dH$, also denoted $\op{sgrad}_\omega H$:
 $$
X_H=H_{p_i}\p_{x^i}-H_{x^i}\p_{p_i}=g^{ij}(x)p_j\p_{x^i}-\p_{x^i}g^{jk}(x)p_jp_k\p_{p_i}
 $$
(summation by repeated indices will be assumed throughout, in our case from 1 to $2=\dim M$).

A function $F=F(x,p)$ is an integral if it is in involution with $H$ wrt the corresponding Poisson structure 
$\{H,F\}=X_H(F)=H_{p_i}F_{x^i}-H_{x^i}F_{p_i}=0$.

Integrals of the form $F(x,p)=f^{i_1\dots i_d}(x)p_{i_1}\cdots p_{i_d}$, polynomial in momenta, 
are known as Killing $d$-tensors. We will also denote $(x^1,x^2)=(x,y)$, $(p_1,p_2)=(p,q)$ in what follows.
In these terms, for $d=1$, the integral $F=u(x,y)p+v(x,y)q$ is also called a Killing vector.

A frational-linear integral is an expression of the form 
 \begin{equation}\label{ratint}
F(x,p)=\frac{P(x,p)}{Q(x,p)}=\frac{u(x,y)p+v(x,y)q}{w(x,y)p+r(x,y)q}, 
 \end{equation}
where $u(x,y),v(x,y),w(x,y),z(x,y)$ are local analytic functions on $M$
(for some results below it suffices to assume those functions smooth) that satisfy $ur-vw\not\equiv0$.

If $F$ is an integral then it is known \cite{AHT,Kr2} that $P$ and $Q$ in \eqref{ratint} are relative Killing vectors
or 1-tensors (also known as generalized Killing tensors), that is, they both satsfy the equation
 \begin{equation}\label{RKV}
\{H,R\}=LR
 \end{equation}
for a linear in momenta function $L(x,p)=\a(x,y)p+\b(x,y)q$ called cofactor (the same for $P$ and $Q$). 
Note that conformal rescaling $R\mapsto e^fR$ results in the change $L\mapsto L+df$ (when we view $L$ as 1-form).
We will identify the pairs $(R,L)$ modulo the following gauge (we will use this equivalence also for cofactor $L$):
 \begin{equation}\label{RLdf}
(R,L)\sim(e^fR,L+df).
 \end{equation}

Let $\mathcal{K}^L_1(g)$ be the space of solutions $R=u(x,y)p+v(x,y)q$ of equation \eqref{RKV} for a fixed $L$.
This equation is of finite type (as will be demonstrated below) and hence its solutions $R$ are given locally by
Cauchy data, that is initial values of $u$ and $v$ at a given point $m\in M$.

 \begin{theorem}\label{Thm1}
The linear space $\mathcal{K}^L_1(g)$ of relative Killing vectors has dimension at most 3;
this upper bound is achieved precisely for metrics $g$ of constant curvature and $L\sim0$.
Otherwise $\dim\mathcal{K}^L_1(g)$ can be 2, 1 or 0.
 \end{theorem}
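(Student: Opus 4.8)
The plan is to derive the PDE system governing relative Killing vectors explicitly, show it is of finite type with a small jet space, and then analyze when maximal dimension is attained.

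The plan is to rewrite the defining equation $\{H,R\}=LR$ in covariant tensorial form and then prolong it to a closed, finite-type system, after which both the dimension bound and the characterization of equality will follow from the integrability (flatness) conditions of the resulting connection.

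First I would express both sides as quadratic forms in the momenta. Writing $R=\xi^ip_i$, so that $\xi$ is the vector field with components $(u,v)$, and viewing the cofactor as the $1$-form $L$ with components $(\a,\b)$, a direct computation of the Poisson bracket shows that $\{H,R\}=LR$ is equivalent to the symmetric tensor equation
$$\nabla_{(i}\xi_{j)}=L_{(i}\xi_{j)},$$
the relative (conformal-type) Killing equation. This determines the symmetric part of $\nabla\xi$ in terms of $\xi$ and $L$, while the antisymmetric part, on a surface, carries a single scalar $\omega$ through $\nabla_{[i}\xi_{j]}=\tfrac12\epsilon_{ij}\omega$. Hence the full covariant derivative $\nabla_i\xi_j$ becomes an explicit linear expression in the triple $(\xi_1,\xi_2,\omega)$.

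Next I would prolong once more. Differentiating the formula for $\nabla_i\xi_j$ and antisymmetrizing via the Ricci identity (on a surface the curvature is governed by the single Gaussian curvature $K$) yields an equation whose unknown is $\nabla_k\omega$; solving it expresses $\nabla_k\omega$ explicitly in terms of $(\xi,\omega)$, the curvature $K$, and the components of $L$ together with their first derivatives. The upshot is a linear connection (a total covariant derivative) on the rank-$3$ bundle with fiber coordinate $(\xi_1,\xi_2,\omega)$: every solution $R\in\mathcal{K}^L_1(g)$ is uniquely recovered from the value of $(\xi_1,\xi_2,\omega)$ at a fixed point $m\in M$. This proves the finite-type claim and the bound $\dim\mathcal{K}^L_1(g)\le3$, and it shows at once that the only remaining possibilities are $2$, $1$ or $0$.

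Finally, the value $3$ is attained exactly when this prolonged connection is flat, since then parallel transport is path-independent and every initial datum at $m$ extends to a solution. I would compute the curvature of the prolonged connection and argue that its vanishing separates into two requirements: that $L$ be closed, hence locally exact, which by the gauge $(R,L)\sim(e^fR,L+df)$ means precisely $L\sim0$; and, once $L$ is gauged to $0$, that $K$ be constant, recovering the classical fact that the Killing algebra of a surface is three-dimensional iff the metric has constant curvature. I expect the main obstacle to be organizing this curvature computation so that the $L$-dependent and the purely metric obstructions decouple cleanly, and in particular verifying that no non-exact choice of $L$ can cancel the constant-curvature obstruction; this is what forces both $L\sim0$ and constant curvature in the extremal case.
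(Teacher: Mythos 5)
Your proposal is correct and follows essentially the same route as the paper: prolong the finite-type system to a closed first-order system on the three fiber quantities $(\xi_1,\xi_2,\omega)$ (the paper does this in isothermal coordinates and computes the compatibility via multi-brackets, you package it covariantly as a connection on a rank-3 bundle), bound the dimension by the fiber dimension, and characterize the extremal case by vanishing of the integrability obstruction. The decoupling you flag as the main obstacle does work out exactly as you hope: in the paper's compatibility condition \eqref{gap} the coefficient of the rotation term $u_y-v_x$ is $3(b_x-a_y)$, i.e.\ a nonzero multiple of $dL$, so flatness forces $dL=0$ first, after which the remaining coefficients reduce to multiples of $k_x$ and $k_y$, forcing constant curvature.
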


 \begin{proof}
In isotermal coordinates $g=e^{2\lambda(x,y)}(dx^2+dy^2)$ the Hamiltonian is $H=\tfrac12e^{-2\lambda(x,y)}(p^2+q^2)$. Denoting $(a,b)=e^{2\lambda}(\a,\b)$,
i.e.\ $L(x,p)=e^{-2\lambda}(ap+bq)$, equation \eqref{RKV} has the following form:
 \begin{equation}\label{qqq}
u_x= -(a+\lambda_x)u - \lambda_yv,\ v_x+u_y = -(av+bu),\ v_y = - \lambda_xu - (b+\lambda_y)v.
 \end{equation}
The prolongation of this system is complete, meaning that all second jets $u_{xx},u_{xy},u_{yy}$, $v_{xx},v_{xy},v_{yy}$
are expressed through the lower jets. Thus the system is of finite type.

The first compatibility condition of \eqref{qqq} may be calculated as the multi-bracket \cite{KL2} of these three equations, 
see \cite{Kr1} for a related computation and \cite{KL1} for the general theory of compatibility. The result is 
 \begin{align}\label{gap}
3(u_y-v_x)(b_x-a_y)+
&\bigl((3b+2\lambda_y)(b_x-a_y) + 4(\lambda_{xx}+\lambda_{yy})\lambda_x - 2(\lambda_{xx}+\lambda_{yy})_x + 2(b_x-a_y)_y\bigr)u \\
-& \bigl((3a+2\lambda_x)(b_x-a_y) - 4(\lambda_{xx}+\lambda_{yy})\lambda_y  + 2(\lambda_{xx}+\lambda_{yy})_y +2(b_x-a_y)_x\bigr)v=0.\notag
 \end{align}

If $dL=0$ for 1-form $L=adx+bdy$, i.e.\ $b_x=a_y$, the above equation is of order 0:
 \begin{equation}\label{kuv}
k_xu+ k_yv =0,
 \end{equation}
where $k=\Delta_g\lambda=-e^{-2\lambda}(\lambda_{xx}+\lambda_{yy})$ is the Gaussian curvature 
(half of the scalar curvature); here $\Delta_g$ is the Laplacian of $g$.
Thus we have an alternative: either $k=\op{const}$, $L\sim0$ and the space of solutions $\mathcal{K}^L_1(g)$ 
is 3-dimensional, or $k$ is non-constant and from \eqref{kuv} we conclude that the space of solutions of \eqref{qqq}
is at most 1-dimensional (if non-trivial it is generated by a Killing vector).

On the other hand, if $dL\neq0$, i.e.\ $b_x\neq a_y$, then equation \eqref{gap} is nontrivial, and 
we get a complete system of the first order. Thus the free jets (Cauchy data) are $u,v$ and hence $\dim\mathcal{K}^L_1(g)\leq2$.
As we will see in examples, all dimensions 2, 1 and 0 are realizable for particular $L$.
 \end{proof}

 \begin{rk}
Contrary to the gap phenomenon for (local) Killing vectors, where $\dim\mathcal{K}_1(g)\in\{3,1, 0\}$,
the value 2 is realizable for $\dim\mathcal{K}^L_1(g)$ if $L\not\sim0$. 
 \end{rk}

Consider now the space of relative Killing vectors without specifying cofactor
 $$
\mathcal{K}^{\op{rel}}_1(g) =\cup_L \mathcal{K}^L_1(g).
 $$
 
 \begin{prop}\label{P1}
For any analytic metric $g$ the space $\mathcal{K}^{\op{rel}}_1(g)/\!\sim$ of local moduli of relative Killing vectors 
is non-empty, and it is parametrized by 1 function of 1 argument.
 \end{prop}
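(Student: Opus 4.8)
The plan is to use the conformal gauge \eqref{RLdf} to remove the scaling freedom of $R$, then to eliminate the cofactor, so that the relative Killing system \eqref{qqq} collapses to a single scalar first-order PDE for one function. The moduli are then exactly the solutions of that PDE, and classical theory parametrizes them by one function of one variable.

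First I would work in the isothermal coordinates of Theorem~\ref{Thm1} and treat $(u,v,a,b)$ jointly as unknowns in \eqref{qqq}, the cofactor now being free. Any nontrivial $R=up+vq$ has $u^2+v^2>0$ on a dense open set, so the gauge $R\mapsto e^fR$ with $f=-\tfrac12\log(u^2+v^2)$ normalizes $u=\cos\theta$, $v=\sin\theta$ for a single function $\theta(x,y)$. The residual gauge preserving $u^2+v^2=1$ is trivial, so each class in $\mathcal{K}^{\op{rel}}_1(g)/\!\sim$ has a unique such representative, up to the irrelevant shift $\theta\mapsto\theta+2\pi$. Substituting $u=\cos\theta$, $v=\sin\theta$ into \eqref{qqq}, the first equation determines $a$ and the third determines $b$ pointwise, while the second becomes a relation between them; eliminating $a$ and $b$ I expect to obtain the single quasilinear equation
\[
\sin\theta\,\theta_x-\cos\theta\,\theta_y=\lambda_x\cos\theta+\lambda_y\sin\theta,
\]
after which $a$ and $b$ are uniquely determined analytic functions of $\theta$ and its first derivatives.

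The crucial feature is that the coefficient $(\sin\theta,-\cos\theta)$ of the principal part never vanishes, so the reduced equation is a genuine, everywhere-nondegenerate first-order PDE for the single unknown $\theta$. By the classical Cauchy problem for scalar first-order PDEs, solved by characteristics and applicable here since $g$, hence $\lambda$, is analytic, the local solutions are in bijection with their Cauchy data along any curve transverse to the characteristic field $(\sin\theta,-\cos\theta)$ --- that is, with one arbitrary analytic function of one variable. This simultaneously yields non-emptiness, since such a Cauchy problem is always locally solvable. Combined with the bijection between gauge classes and solutions $\theta$, this gives the claim.

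The main obstacle I anticipate is bookkeeping rather than hard analysis: one must check that the elimination of $(a,b)$ is legitimate across the loci $\cos\theta=0$ and $\sin\theta=0$, verifying that the reduced PDE there forces exactly the relations needed for $a,b$ to extend analytically --- for instance, where $\cos\theta=0$ the PDE reads $\theta_x=\lambda_y$, which is precisely the solvability condition of the first equation of \eqref{qqq}. One must also confirm that the gauge-fixing is exhaustive and that distinct solutions $\theta$ (mod $2\pi$) represent distinct moduli, so that the count ``one function of one argument'' is neither inflated nor deflated.
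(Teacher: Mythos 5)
Your argument is correct, and --- up to the substitution $w=\tan\theta$ --- it coincides with the alternative proof that the paper itself records in the Remark immediately following Proposition~\ref{P1}: there the gauge is used to set $u=1$, $v=w$, and \eqref{qqq} collapses to the single quasilinear equation $w_y-ww_x+(w^2+1)(\lambda_x+w\lambda_y)=0$, which is exactly your equation after writing $w=\tan\theta$ and multiplying by $-\cos\theta$. The paper's primary proof takes a different route: it fixes the gauge on the cofactor rather than on $R$, imposing $a_x+b_y=0$ and writing $a=f_y$, $b=-f_x$, which puts \eqref{qqq} into Cauchy--Kovalevskaya form on the triple $(u,v,f)$; the solutions then depend on $3$ functions of one argument, from which the residual harmonic gauge freedom ($2$ functions of one argument) is subtracted to get $3-2=1$. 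Your version (and the Remark's) is arguably cleaner in that the gauge is fixed completely, so the moduli are literally the solutions of one everywhere-nondegenerate scalar first-order PDE and no informal subtraction of gauge dimensions is needed; the price is the division by $\cos\theta$ and $\sin\theta$ and the attendant bookkeeping on their zero loci, which you correctly flag. For the purely local statement this is harmless: a constant rotation of the isothermal coordinates $(x,y)$ preserves the isothermal form and shifts $\theta$ by a constant, so one may assume $\sin\theta\cos\theta\neq0$ near the reference point, after which your elimination of $(a,b)$ and the characteristics argument go through verbatim.
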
 
 
 \begin{proof}
We proceed with the notations from the proof of Theorem \ref{Thm1}.
Since the equivalence relation $L\sim0$ is given by $a_y=b_x$, let us impose the constraint $a_x+b_y=0$.
Modulo the gauge it is equivalent to solving the Laplace equation. Solutions of the constraint are parametrized as
$a=f_y$, $b=-f_x$. Substituting this into \eqref{qqq} we obtain a system in the Cauchy-Kovalevskaya form on $(u,v,f)$:
 \begin{align*}
u_y &= \frac{v}{u}u_x +u f_x +v \lambda_x +\frac{v^2}{u}\lambda_y,\\
v_y &= v f_x -u\lambda_x -v\lambda_y,\\
f_y &= -\frac1u u_x -\lambda_x -\frac{v}{u}\lambda_y.
 \end{align*}
Since the coefficients are analytic, there are local solutions, depending on 3 functions of 1 argument.

The remaining gauge freedom is a harmonic function, parametrized by 2 functions of 1 argument,
so the quotient carries the freedom of $3-2=1$ function of 1 argument.
 \end{proof}
 
 \begin{rk}
An alternative proof is as follows. Let $w=v/u$. Use gauge to make $u=1$, $v=w$, leaving $a,b$ un-touched.  
Then equations \eqref{RKV} on $R=p+w(x,y)q$, after elimination $a,b$ imply
 $$
w_y-ww_x+(w^2+1)(\lambda_x+w\lambda_y)=0. 
 $$
This has solutions for any smooth $\lambda$ by the methods of characteristics, and then we get
$a=-\lambda_x-w\lambda_y$, $b=w(\lambda_x+w\lambda_y)-w_x$.
 \end{rk}

Thus to any $g$ there corresponds infinitely many cofactors $L$ with non-trivial relative Killing vectors, but generically
$\dim\mathcal{K}_1^L(g)=1$. To get integrals $F=P/Q\in\mathcal{F}(g)$ we need $L$ with more than one relative Killing vector.
Denote $\mathcal{S}_g=\{L:\dim\mathcal{K}_1^L(g)\geq2\}$ and consider the map
 \begin{equation}\label{ass}
\cup_{L\in \mathcal{S}_g}\op{Gr}_2\mathcal{K}_1^L(g)\ni\langle P,Q\rangle\mapsto [P:Q]=F\in\mathcal{F}(g).
 \end{equation}
This map is $\op{SL}(2,\R)$-equivariant, where the group acts by changing the basis to the left
and by fractional-linear transformations to the right. Actually, the group action to the right is 
 \begin{equation}\label{SL2}
\op{PGL}(2,\R)\ni\begin{pmatrix}\a & \b\\ \gamma & \delta\end{pmatrix}:\frac{P}{Q}\mapsto
\frac{\a P+\b Q}{\gamma P+\delta Q}.
 \end{equation}
 
 \begin{prop}\label{P2}
System \eqref{RKV} on unknowns $(u,v,a,b)$ with the condition $\dim\mathcal{K}_1^L(g)\geq2$ modulo gauge \eqref{RLdf} 
is of finite type. More precisely its solution space is a (finite-dimensional) algebraic manifold.
 \end{prop}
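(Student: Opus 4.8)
The plan is to work in the isothermal coordinates of Theorem \ref{Thm1} and to show that adjoining the condition $\dim\mathcal{K}_1^L(g)\geq2$ collapses the otherwise infinite-dimensional family of relative Killing vectors (cf.\ Proposition \ref{P1}) to a finite-type system. First I would split into the two branches dictated by the first compatibility condition \eqref{gap}. If $dL=0$, i.e.\ $b_x=a_y$, then by the argument in the proof of Theorem \ref{Thm1} the requirement $\dim\geq2$ forces $k\equiv\op{const}$ and $L\sim0$; this branch contributes only the single gauge class of vanishing cofactor together with its $3$-dimensional space of genuine Killing vectors, which is manifestly finite-dimensional and algebraic. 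The substance of the proof is therefore the generic branch $dL\neq0$.

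On this branch \eqref{gap} is nontrivial and solves for the skew derivative $u_y-v_x$ as a linear expression in $(u,v)$ whose coefficients are rational in the $2$-jet of $(a,b)$ and polynomial in the jets of $\lambda$. Adjoining this to \eqref{qqq}, which already fixes $u_x$, $v_y$ and $u_y+v_x$, expresses \emph{every} first derivative of $(u,v)$ through $(u,v)$ itself: the pair $(u,v)$ becomes a parallel section of a linear connection $\nabla$ on a rank-$2$ bundle, with connection form $\Theta$ determined by the jets of $(a,b)$ and $\lambda$. Since $\dim\mathcal{K}_1^L(g)\leq2$ here, equality $\dim=2$ holds exactly when $\nabla$ is flat, $\op{Curv}(\Theta)=0$; otherwise the curvature produces an order-zero constraint cutting the dimension below $2$. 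This flatness is an overdetermined higher-order system on the cofactor $(a,b)$ alone, and it is precisely what carves out $\mathcal{S}_g$.

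Next I would prolong the full system on $(u,v,a,b)$ — the three equations \eqref{qqq}, the order-zero relation \eqref{gap}, and the flatness equations — and show it is of finite type. The mechanism mirrors Theorem \ref{Thm1}: the parallel-transport equations already make all derivatives of $(u,v)$ functions of the zeroth jet, while the flatness equations together with their multi-bracket compatibility in the sense of \cite{KL1,KL2} express, after finitely many prolongations, every derivative of $(a,b)$ through a bounded jet at a fixed point $m$. Thus the map sending a solution $(u,v,a,b)$ to its jet $j^N_m(u,v,a,b)$ embeds the solution space into the finite-dimensional jet space $J^N_m$, with image cut out by the closure relations. As these relations are polynomial in the jet coordinates of $(u,v,a,b)$, with coefficients fixed by the jet of $\lambda$ at $m$, the solution space is a real-algebraic set, smooth on its regular locus — the asserted finite-dimensional algebraic manifold. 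Finally the gauge \eqref{RLdf} acts on $J^N_m$ through finitely many parameters (the jet of $f$ at $m$), so fixing an algebraic slice — for instance normalizing the $1$-jet of $R$ at $m$ and imposing the harmonic constraint $a_x+b_y=0$ of Proposition \ref{P1} — leaves the quotient a finite-dimensional algebraic manifold.

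The main obstacle is the termination of the prolongation in the generic branch, namely proving that the flatness system on $(a,b)$ is genuinely of finite type rather than generating an unbounded tower of independent compatibility conditions; this is where the explicit multi-bracket computation does the work, and where one must separately treat the degeneracy strata on which $u$, $v$, or $b_x-a_y$ vanish and the eliminations defining $\Theta$ become singular. Assembling these strata together with the constant-curvature branch into a single algebraic model is the remaining bookkeeping.
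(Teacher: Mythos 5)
Your setup matches the paper's: in the branch $dL\neq0$ you adjoin \eqref{gap} to \eqref{qqq} to obtain a complete first-order (connection-type) system on $(u,v)$, and you correctly identify that $\dim\mathcal{K}_1^L(g)=2$ is the flatness of that connection, an overdetermined system on the cofactor alone; the $dL=0$ branch is correctly disposed of via Theorem \ref{Thm1}. The gap is that you stop exactly where the proof has to start. You assert that the flatness equations ``together with their multi-bracket compatibility \dots express, after finitely many prolongations, every derivative of $(a,b)$ through a bounded jet,'' and then concede in your last paragraph that proving this termination is ``the main obstacle.'' But termination \emph{is} the content of the proposition: a priori the prolongation of an overdetermined nonlinear system can keep producing new independent positive-order equations without ever closing up, and nothing in your argument rules this out. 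Likewise, your claim that the solution space is an algebraic manifold because the closure relations are ``polynomial in the jet coordinates'' is not a proof --- it presupposes that the relations eventually become zero-order and algebraic, which is precisely what must be shown.

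The paper closes this gap by explicit computation. After fixing the gauge by $b=0$, $a_y=w$ (your normalization $a_x+b_y=0$ would also work, but the point is to reduce the cofactor to one scalar unknown), the four flatness conditions $A_{ij}=0$ collapse to three second-order equations $EQ_2$ and one first-order equation $Eq_1$ on the single function $w$; cross-differentiation and substitution then produce the first-order system \eqref{wxeq}--\eqref{wyeq}, which solves for $w_x,w_y$ rationally in $w$, together with the zero-order polynomial equation \eqref{w0eq} of degree $6$ in $w$. It is this explicit degree-$6$ equation, combined with the alternative of vanishing numerators and denominators in \eqref{wxeq}--\eqref{wyeq}, that delivers both the finiteness (at most $6$ admissible cofactors, hence finite type) and the algebraicity of the solution space. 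Neither conclusion follows from the general prolongation framework you describe; you would need to carry out this reduction (or an equivalent one) to complete the argument.
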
 
 
 \begin{proof}
We already discussed the case $dL=0$ in the proof of Theorem \ref{Thm1}, so let us assume $\varrho:=b_x-a_y\neq0$.
Then the first prolongation-projection of \eqref{qqq} is given by
 \begin{gather*}
u_x= -(a+\lambda_x)u - \lambda_yv,\quad\
v_x= \frac{e^{2\lambda}}{3\varrho}\Bigl(k_x u+k_y v\Bigr) 
	+\tfrac13(\lambda+\log\rho)_y u - \bigl(a+\tfrac13(\lambda+\log\rho)_x\bigr)v,\\
u_y= -\frac{e^{2\lambda}}{3\varrho}\Bigl(k_x u+ k_y v\Bigr) 
	- \bigl(b+\tfrac13(\lambda+\log\rho)_y\bigr)u +\tfrac13(\lambda +\log\rho)_x v,\quad
v_y = - \lambda_xu - (b+\lambda_y)v. 
 \end{gather*}

Eliminating the gauge freedom by letting $b=0$, $a_y=w$, we get compatibility of this system as the pair of equations
$\{A_{11}u+A_{12}v=0,A_{21}u+A_{22}v=0\}$. To keep $\dim\mathcal{K}_1^L(g)=2$ these expressions
should vanish identically, and the four equations $A_{ij}=0$ are equivalent to
 \begin{gather*}
w_{xx}= 
\Bigl(3kw -\tfrac73\lambda_xk_y + \lambda_yk_x - k_{xy} + \tfrac53k_y\frac{w_x}{w}\Bigr)e^{2\lambda} 
 + \frac{k_y^2}{3w}e^{4\lambda} + (2\lambda_y^2 -\tfrac23\lambda_x^2 + 2\lambda_{xx})w
 -\tfrac13\lambda_xw_x - \lambda_yw_y +\tfrac43\frac{w_x^2}w,\\
\hskip-15pt w_{xy}=
\Bigl( \frac{4k_yw_y- k_xw_x}{3w} - \tfrac13\lambda_xk_x - \tfrac53\lambda_yk_y - k_{yy}\Bigr)e^{2\lambda}
 - \frac{k_xk_y}{3w}e^{4\lambda} - 3w^2 + (2\lambda_{xy}-\tfrac83\lambda_x\lambda_y)w + \tfrac13(\lambda_xw_y + \lambda_yw_x)
  + \tfrac43\frac{w_xw_y}{w},\\
w_{yy}=
\Bigl(kw -\lambda_xk_y + \tfrac73\lambda_yk_x + k_{xy} - \tfrac53k_x\frac{w_y}{w}\Bigr)e^{2\lambda}
 + \frac{k_x^2}{3w}e^{4\lambda} + (2\lambda_x^2 - \tfrac23\lambda_y^2 - 2\lambda_{xx})w -\lambda_xw_x 
 - \tfrac13\lambda_yw_y + \tfrac43\frac{w_y^2}w,\\
k_x\frac{w_x}w + k_y\frac{w_y}w - 2\lambda_xk_x - 2\lambda_yk_y - k_{xx} - k_{yy}w - 6w^2e^{-2\lambda}=0.
 \end{gather*}

Let $EQ_2$ denote the second order system of the above three equations and $Eq_1$ the last equation of the first order in $w$.
Computing the compatibility conditions for $EQ_2$, i.e.\ equality of mixed derivatives $(w_{xx})_y=(w_{xy})_x$ and
$(w_{xy})_y=(w_{yy})_x$, as well as differentiation of $Eq_1$ by $x,y$ and substitution of $EQ_2$, 
we get four more equations of the first order on $w$ (with coefficients depending differentially on $\lambda$). 

Together with $Eq_1$ we get a system of 5 first order quasilinear PDEs on $w$, of which only three are independent. 
This system is resolved as follows. The first order sub-system, further denoted by $EQ_1$, is
 \begin{align}
w_x &= \frac{180e^{-2\lambda}w^5+h_{13}w^3+h_{12}w^2+h_{11}w+h_{10}}{30k_xw^2+h_{20}},\label{wxeq}\\
w_y &= \frac{(18k_{xy}-18\lambda_xk_y + 42\lambda_yk_x)w^3+h_{32}w^2+h_{31}w+h_{30}}{30k_xw^2+h_{20}},\label{wyeq}
 \end{align}
where $h_{ij}$ are certain differential polynomials in $\lambda$ and $e^\lambda$, but their exact form is not 
important and hence omitted. There is also one zero-order equation $Eq_0$ of the following form:
 \begin{equation}\label{w0eq}
5400w^6+h_{42}w^2+h_{41}w+h_{40}=0.
 \end{equation}
Thus we get that either $w$ is given by the condition of vanishing of both numerator and denominator in
\eqref{wxeq}-\eqref{wyeq} or is given by \eqref{w0eq}. However the former condition defines $w$ up to sign,
and this $w$ should also satisfy \eqref{w0eq}.
This gives only finitely many solutions, in fact no more than 6, hence
finitely many $L\in\mathcal{S}_g$, i.e.\ such that $\dim\mathcal{K}_1^L(g)=2$. 
The solutions are given by algebraic equations, whence the claim.
 \end{proof}

\section{Counting frational-linear integrals}\label{S3}

With each admissible $L\in\mathcal{S}_g$ (that is $\dim\mathcal{K}_1^L(g)\geq2$) 
we associate a family of integrals $F=P/Q$ as in \eqref{ratint} via \eqref{ass}. 
The group $\op{PGL}(2,\R)$ acts on such integrals. Since fractional-linear integrals are
given by the equation on relative Killing vectors, Proposition \ref{P2} implies that the solution space is 
algebraic and the action is algebraic. Hence by Rosenlicht's theorem \cite{R} there exists a rational quotient
$\mathcal{F}(g)/\op{PGL}(2,\R)$, which we think of as the moduli space of fractional-linear integrals.

If $g$ is of constant curvature, then every relative Killing vector is (homologous) to a genuine Killing vector, and 
$F=P/Q$ is a ratio of $P,Q\in\mathcal{K}_1(g):=\mathcal{K}^0_1(g)$. 
Since $\dim\mathcal{K}_1(g)=3$ and the $\op{PGL}(2,\R)$ orbits
correspond to 2-dimensional planes in $\mathcal{K}_1(g)$, the moduli space is 
$\mathcal{F}(g)/\op{PGL}(2,\R)=\op{Gr}_2\mathcal{K}_1(g)=\R\mathbb{P}^2$.

If $g$ is not of constant curvature, then every admissible $L$ determines a fractional-linear integral  
up to the action of $\op{PGL}(2,\R)$, and the number of admissible $L$ is finite by Proposition \ref{P2}.
From its proof we get:

 \begin{cor}\label{Cor1}
The moduli space $\mathcal{F}(g)/\op{PGL}(2,\R)$ is either $\R\mathbb{P}^2$ or a finite set of points,
no more than 6.
 \end{cor}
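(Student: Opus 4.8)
The plan is to split the argument according to whether the Gaussian curvature $k$ of $g$ is constant, treating the two alternatives of the statement separately and using Theorem \ref{Thm1} and Proposition \ref{P2} as the main inputs. In each case the task reduces to understanding how $\op{PGL}(2,\R)$ acts on the integrals associated to an admissible cofactor via \eqref{ass} and \eqref{SL2}.

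First I would dispose of the constant curvature case, aiming at $\R\mathbb{P}^2$. The key reduction is that when $k=\op{const}$ every admissible cofactor is gauge-trivial, $L\sim0$. Indeed, in the branch $dL\neq0$ the first-order equation $Eq_1$ from the proof of Proposition \ref{P2} (with the gauge $b=0$, $a_y=w$, so that $\varrho=-w$) reads $k_x\tfrac{w_x}{w}+k_y\tfrac{w_y}{w}-2\lambda_xk_x-2\lambda_yk_y-k_{xx}-k_{yy}w-6w^2e^{-2\lambda}=0$, and for $k\equiv\op{const}$ all $k$-derivatives vanish, leaving $6w^2e^{-2\lambda}=0$, i.e.\ $w=0$ and $\varrho=0$, a contradiction. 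Hence in constant curvature the only admissible $L$ satisfy $dL=0$, so by Theorem \ref{Thm1} and the gauge \eqref{RLdf} each relative Killing vector is homologous to a genuine one and $\mathcal{K}^L_1(g)=\mathcal{K}_1(g)$ is the full $3$-dimensional space of Killing vectors. Every fractional-linear integral is therefore a ratio $F=P/Q$ with $P,Q\in\mathcal{K}_1(g)$ linearly independent (since $ur-vw\not\equiv0$), and conversely any such ratio is an integral because $\{H,P/Q\}=(\{H,P\}Q-P\{H,Q\})/Q^2=0$. To identify the orbit space, note that under \eqref{SL2} the span $\langle P,Q\rangle\subset\mathcal{K}_1(g)$ is a $\op{PGL}(2,\R)$-invariant, while $\op{PGL}(2,\R)$ acts transitively on the bases of a fixed $2$-plane modulo common scaling; thus the orbits are in bijection with the $2$-planes, giving $\mathcal{F}(g)/\op{PGL}(2,\R)=\op{Gr}_2\mathcal{K}_1(g)=\op{Gr}_2(\R^3)\cong\R\mathbb{P}^2$, the last identification sending a $2$-plane to its annihilator line.

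In the non-constant curvature case I would argue finiteness. By Theorem \ref{Thm1} the value $\dim\mathcal{K}^L_1(g)=3$ cannot occur, so an integral forces an admissible $L$ with $\dim\mathcal{K}^L_1(g)=2$ exactly, i.e.\ $L\in\mathcal{S}_g$. For such $L$ the plane $\langle P,Q\rangle$ must equal all of $\mathcal{K}^L_1(g)$, so, exactly as above, the $\op{PGL}(2,\R)$-action collapses all associated integrals to a single orbit, hence a single point of the moduli space. Since every fractional-linear integral $F=P/Q$ has $P,Q$ solving \eqref{RKV} with a common cofactor $L$, which is thereby admissible, the map \eqref{ass} is onto the moduli space; and by Proposition \ref{P2} there are at most $6$ admissible $L$ modulo gauge. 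Therefore $\mathcal{F}(g)/\op{PGL}(2,\R)$ consists of at most $6$ points.

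I expect the main obstacle to be the constant-curvature identification rather than the counting: one must be sure that no stray admissible $L$ with $dL\neq0$ contributes extra isolated points alongside $\R\mathbb{P}^2$, which is precisely what the collapse of $Eq_1$ to $w=0$ rules out, and that the correspondence between $\op{PGL}(2,\R)$-orbits and $2$-planes is a genuine bijection onto the \emph{entire} Grassmannian, so that the moduli space is all of $\R\mathbb{P}^2$ and not merely a subset. Once these points are secured, the finiteness in the remaining case follows directly from Proposition \ref{P2}.
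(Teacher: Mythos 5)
Your proof is correct and follows essentially the same route as the paper: in constant curvature the orbits are the $2$-planes, giving $\op{Gr}_2\mathcal{K}_1(g)=\R\mathbb{P}^2$, and otherwise each of the at most $6$ admissible cofactors from Proposition \ref{P2} contributes a single $\op{PGL}(2,\R)$-orbit. Your explicit check via $Eq_1$ that constant curvature excludes admissible $L$ with $dL\neq0$ spells out a step the paper leaves implicit, but it is not a different method.
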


Thus if the moduli space is non-empty it has dimension 2 or 0. This corresponds to the result of \cite{AA}
according to which the space of fractional-linear integrals has dimension 5 or 3. Our approach, based on
relative Killing vectors, is different and the proof gives a finer description of the space.
 
 \begin{rk}
One can also consider the problem of existence of fractional-linear integrals globally. Recall that 
by Kozlov-Kolokoltsov theorem the geodesic flows on surfaces of genus $>1$ are non-integrable \cite{BF}.
If $M=\mathbb{S}^2$ there are no obstructions for extension of local integrals, and in the case of round metric $g$
the moduli space of fractional-linear integrals is again 2-dimensional $\op{Gr}_2\mathcal{K}_1(g)$.
The same concerns $\R\mathbb{P}^2$. For the torus $\mathbb{T}^2$ the space of Killing vectors is 2-dimensional,
so the moduli space of fractional-linear integrals is a point. For the Klein bottle $\dim\mathcal{K}_1(g)=1$,
therefore the space of fractional-linear integrals is empty.
 \end{rk}

Let us now obtain a criterion of the existence of local fractional-linear integeral.
For this let us return to the end of the proof of Proposition \ref{P2}.

First, compute the compatibility of \eqref{wxeq}-\eqref{wyeq}, expressed as the difference of mixed derivatives
$(w_x)_y-(w_y)_x$ modulo those equations. This gives equation $Eq_0'$ of order zero in $w$.
Next, differentiate equation $Eq_0$ of \eqref{w0eq} by $x$ and $y$ and substitute \eqref{wxeq}-\eqref{wyeq},
obtaining equations $Eq_0''$ and $Eq_0'''$ respectively, both of order zero in $w$. Finally add to those three
the zero order equation $Eq_0$, and denote the obtained system $EQ_0$.

It is polynomial in $w$ of orders, respectively, 10, 8, 7 and 6, with coefficients being differential polynomials
in metric $g$ (in isotermal form, of the conformal factor $\lambda$). These coefficients are not differential invariants,
however the condition that the polynomials have a common root $w$ is invariant. In fact, this condition is given by
the resultant of the polynomials, and it is a differential invariant $\Phi$ of order 6 in the coefficients of metric $g$;
equivalently order 4 in the curvature. This invariant is not scalar, it has three components.

 \begin{theorem}
A fractional-linear integral $F=P/Q$ for the geodesic flow on a surface exists if and only if the above 6th order
differential invariant vanishes $\Phi=0$. 
 \end{theorem}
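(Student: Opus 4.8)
The plan is to convert the analytic existence problem into the purely algebraic question of whether the four polynomials constituting $EQ_0$ share a common root, and then to recognize this as a resultant condition $\Phi=0$. First I would record the equivalence, established in Section \ref{S2}, between the existence of a fractional-linear integral and the non-emptiness of $\mathcal{S}_g$: by \cite{AHT,Kr2} the numerator and denominator of $F=P/Q$ are relative Killing vectors sharing a single cofactor $L$, and the non-degeneracy $ur-vw\not\equiv0$ forces $P,Q$ to be linearly independent, i.e.\ $\dim\mathcal{K}_1^L(g)\geq2$, while conversely the map \eqref{ass} manufactures an integral from any admissible $L$. Hence it suffices to decide when some admissible $L$ exists. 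I would dispatch the flat direction $dL=0$ separately — there Theorem \ref{Thm1} forces constant curvature and $F$ becomes a ratio of genuine Killing vectors — and concentrate on $dL\neq0$, where the gauge-fixed computation of Proposition \ref{P2} has already reduced the problem to the first-order system \eqref{wxeq}-\eqref{wyeq}, which I abbreviate $w_x=\Psi_1(w)$, $w_y=\Psi_2(w)$, together with the zero-order constraint \eqref{w0eq} for the single scalar $w$.

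The core of the argument is then the equivalence: the reduced system admits a local analytic solution $w$ if and only if the four polynomials $Eq_0,Eq_0',Eq_0'',Eq_0'''$ forming $EQ_0$ have a common root. Necessity is immediate, since a solution $w(x,y)$ satisfies $Eq_0$ pointwise, must satisfy the integrability condition $Eq_0'$ of the first-order system, and — as $Eq_0=0$ holds identically — must have its total derivatives $Eq_0''$, $Eq_0'''$ vanish as well, so all four share the root $w(x,y)$ everywhere. For sufficiency I would select a simple common root $w_0(x,y)$, an analytic function obeying $Eq_0\equiv Eq_0''\equiv Eq_0'''\equiv0$. Differentiating the identity $Eq_0(w_0)\equiv0$ in $x$ and subtracting $Eq_0''=0$ — which is exactly this $x$-derivative with $w_x$ replaced by $\Psi_1$ — yields $\p_w Eq_0\cdot(w_{0,x}-\Psi_1(w_0))=0$; at a simple root $\p_w Eq_0\neq0$, so $w_0$ satisfies \eqref{wxeq}, and symmetrically \eqref{wyeq} follows from $Eq_0'''$. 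Thus the algebraic root is already a solution of the full reduced system, and it produces an admissible cofactor $L$ through $b=0$, $a_y=w_0$.

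It remains to phrase ``four polynomials in $w$, of degrees $10,8,7,6$, have a common root'' as the vanishing of the pertinent resultants of the coefficient data. Since those coefficients are differential polynomials in the conformal factor $\lambda$, and the existence of an integral is manifestly independent of the isothermal chart, these resultants must organize into a differential invariant $\Phi$ of order $6$ in $g$, equivalently order $4$ in the curvature, its three components reflecting that detecting a common zero of four univariate polynomials needs more than a single pairwise resultant. I expect the principal obstacles to be twofold. First, the degenerate case of a multiple common root, where $\p_w Eq_0=0$ and the clean argument above breaks down; here one must fall back on the finite type of the system, proved in Proposition \ref{P2}, and on Cauchy--Kovalevskaya to promote the jet-wise consistency of the prolongation into an honest analytic solution. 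Second, one must confirm that the constant-curvature locus — where integrals exist through the three-dimensional $\mathcal{K}_1(g)$ rather than through \eqref{wxeq}-\eqref{w0eq} — is contained in $\{\Phi=0\}$, which I would verify by specializing the resultants to $k=\op{const}$.
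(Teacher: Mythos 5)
Your proposal follows essentially the same route as the paper: reduce existence to solvability of the finite-type system \eqref{wxeq}--\eqref{wyeq}, \eqref{w0eq} for an admissible cofactor, identify its complete set of compatibility conditions with the four zero-order polynomials constituting $EQ_0$, and express the common-root condition as the vanishing of the resultant invariant $\Phi$. Your explicit simple-root computation (showing a common root of $Eq_0$, $Eq_0''$, $Eq_0'''$ automatically satisfies the first-order system) is a concrete elaboration of the paper's appeal to finite type, and the two caveats you flag — multiple roots and the constant-curvature locus — are precisely the points the paper's own proof leaves implicit.
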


 \begin{proof}
In the proof of Proposition \ref{P2} we demonstrated that the local existence of a fractional-linear integral
is equivalent to solvability of the complete system $EQ_2+EQ_1+Eq_0$. The compatibility of $EQ_2$ 
was already included in the system. The compatibility of $EQ_1+Eq_0$ is precisely measured by $EQ_0$.
If the system $EQ_0$ has a common zero $w$, it will also satisfy $EQ_2+EQ_1+Eq_0$, since $Eq_0$ is a part of $EQ_0$
and its compatibility with the higher order equations was encounted. Since the system on $w$ is of finite type
this implies the claim.
 \end{proof}
 
 \begin{rk}
We do not show relative invariant $\Phi$ explicitly, as its expression is rather large, even when presented in terms of 
differential invariants of $g$, how this was done for the criterion of Liouville metric in \cite{Kr1}. It is remarkable that the differential
invariant of $g$ governing the existence of fractional-linear integral is almost of the same complexity as that for
the existence of a quadratic integral (for the latter there is one invariant of order 6 and four invariants of order 7). 
For comparison, the criterion for $g$ to admit a Killing vector is given by one invariant of order 4 and one
invariant of order 5, see \cite{Kr1}. See also \cite{Ba} for another criterion of existence rational intergals.
 \end{rk}

Finally let us discuss a relation between relative and absolute Killing vectors for $g$ of nonconstant curvature.
Recall \cite{Koe} (see also \cite{Kr1}) that in the presence of a linear integral the number of  linearly independent 
quadratic ones is even: 2, 4 or 6. In particular, the existence of at least 4 Killing 2-tensors implies the existence of a Killing vector.
As we saw in Proposition \ref{P1} the existence of one relative Killing vector is not a restriction, but the existence of a pair 
of such with the same cofactor $L$ gives a fractional-linear integral.

 \begin{theorem}\label{TKV}
A surface of revolution $(M,g)$ with non-constant curvature does not possess a (nontrivial) fractional-linear integral.  
 \end{theorem}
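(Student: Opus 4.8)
The plan is to use the structural rigidity of surfaces of revolution to force any fractional-linear integral to degenerate. A surface of revolution admits by definition a Killing vector; in isothermal coordinates we may take $\lambda=\lambda(x)$ to depend on a single variable, with the rotational Killing vector being $K=q$ (so $u=0$, $v=1$ solves \eqref{qqq} with $a=b=0$). This is a strong normal form, and the goal is to show that the system $EQ_0$ from the proof of the previous theorem has no admissible common root $w$ under this ansatz unless the curvature $k$ is constant.

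First I would substitute $\lambda=\lambda(x)$ into the defining relations, so that all $y$-derivatives of $\lambda$ vanish and $k=k(x)$ depends on $x$ alone (hence $k_y=0$, $k_x=k'(x)$). This should collapse the three second-order equations $w_{xx},w_{xy},w_{yy}$ and the first-order constraint $Eq_1$ dramatically: with $k_y=0$ and $\lambda_y=0$, the coefficient of $w^2$ in the denominator $30k_xw^2+h_{20}$ of \eqref{wxeq}--\eqref{wyeq} and the various $h_{ij}$ simplify, and several of the curvature-gradient terms drop out. The natural expectation is that the zero-order equation $Eq_0$ of \eqref{w0eq} reduces to a polynomial in $w$ whose coefficients depend only on $x$, forcing $w=w(x)$ (or forcing $w_y$ to be determined purely by lower data), after which the remaining equations in $EQ_0$ overdetermine $w$ and yield a differential identity on $\lambda$ alone.

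The key step, and the one I expect to be the main obstacle, is showing that the resulting overdetermined system on $w(x)$ (equivalently, the vanishing of the resultant $\Phi$) is incompatible with $k$ non-constant. Concretely, after the reduction one obtains for each candidate root $w$ a first-order ODE in $x$ whose consistency with the algebraic equation $Eq_0$ imposes a relation forcing $k_x=0$. I would organize this as follows: either the denominator $30k_xw^2+h_{20}$ vanishes identically (the degenerate branch), which should directly give $k_x=0$; or it does not, in which case $w$ is an honest function satisfying both \eqref{wxeq} and the algebraic constraint \eqref{w0eq}, and differentiating \eqref{w0eq} along \eqref{wxeq} must reproduce a multiple of $Eq_0$, an identity that can only hold when $k'=0$.

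An alternative and perhaps cleaner route, which I would pursue in parallel, invokes the remark after Proposition \ref{P1} directly: writing $R=p+w(x,y)q$, the existence of a \emph{second} relative Killing vector with the same cofactor $L$ means the rank-two condition $\dim\mathcal{K}_1^L(g)\geq2$. Since the rotational symmetry already supplies one genuine Killing vector $K=q$, one can ask whether $q$ can be completed to a two-dimensional $\mathcal{K}_1^L(g)$ for some admissible $L$. Using that $K=q$ is a true Killing vector (cofactor zero on it) together with the gauge normalization, the compatibility of \eqref{qqq} for the pair $\{q, R\}$ should, after eliminating $a,b$, reduce to a single ODE relating $w$ and $\lambda(x)$; separating variables and matching $y$-dependence then forces $\lambda_{xx}=\text{const}\cdot e^{2\lambda}$, i.e.\ constant curvature. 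I expect the substitution $\lambda=\lambda(x)$ to make the three components of $\Phi$ reduce to a single essential scalar condition that is equivalent to $k'(x)=0$, completing the proof.
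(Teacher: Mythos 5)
Your first route is essentially the paper's proof: one sets $\lambda=\lambda(x)$, runs the machinery of Proposition~\ref{P2} with all $y$-derivatives of the curvature invariants killed, and shows that both branches of the dichotomy (common vanishing of numerator and denominator in \eqref{wxeq}--\eqref{wyeq} on one hand, the algebraic equation \eqref{w0eq} plus its differential consequences on the other) force $k_x=0$. The paper carries out the two verifications you defer as ``expectations'' by rewriting everything in the differential invariants $k$ and $j=e^{-\lambda}\lambda_x$: in the first branch the candidate root is $w=\pm\tfrac1{2\sqrt{30}}e^{2\lambda}\sqrt{k_x^{-1}k_{xx}-8jk_x^{1/2}}$, whence $w_y=0$ and the numerator of \eqref{wyeq} gives $k_x=0$; in the second branch $Eq_0$ gives $w=\pm\tfrac1{2\sqrt{15}}e^{2\lambda}\sqrt{8jk_x^{1/2}-k_x^{-1}k_{xx}}$, which is incompatible with the remaining equations of $EQ_0$. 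So the structure of your main argument is sound and matches the paper, though the decisive computations still have to be done.

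Your ``alternative and perhaps cleaner route'' has a genuine gap and cannot replace the first one. You propose to ask whether the rotational Killing vector $K=q$ can be completed to a two-dimensional $\mathcal{K}_1^L(g)$. But $q$ satisfies $\{H,q\}=0$, so under the gauge \eqref{RLdf} it lies in $\mathcal{K}_1^L(g)$ only for cofactors $L\sim0$; in that case $\dim\mathcal{K}_1^L(g)\geq2$ already forces $\dim\mathcal{K}_1(g)=3$ and constant curvature by Theorem~\ref{Thm1}, which is the easy (and already known) part. A fractional-linear integral could a priori arise from a cofactor $L\not\sim0$ with \emph{neither} $P$ nor $Q$ gauge-equivalent to $q$ or to any genuine Killing vector --- this is exactly what happens in Example~1 of Section~\ref{S4}, where there are no Killing vectors at all. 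So the pair $\{q,R\}$ ansatz misses the main case entirely, and the whole content of the theorem is the exclusion of nontrivial cofactors, which only your first route addresses.
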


 \begin{proof}
We can choose the metric in local coordinates $x,y$ such that $\lambda_y=0$ for the conformal factor of $g$, so 
 $$
H=\frac12e^{-2\lambda(x)}(p^2+q^2). 
 $$
Then we can proceed along the lines of proof of Proposition\ref{P2}, but the computations are easier as 
all derivatives of the curvature invariants along $y$ vanish. 

Moreover, the differential invariants of $g$ in those coordinates become of lower order.
Indeed, the curvature $k=-e^{-2\lambda}\lambda_{xx}$ and its derivatives along the vector field
$\op{grad}k=-2e^{-4\lambda}(\lambda_{xxx}-2\lambda_x\lambda_{xx})\p_x$ are invariants, but
the expression $j=e^{-\lambda}\lambda_x$ is also an invariant. Up to the sign it is equal to the length of
the commutator of $V$ and $JV$, where $V$ is a unit vector along foliation $\{k=\op{const}\}$ and $J$
is the operator of rotation by $\pi/2$ (given the orientation).

Expressing the results of computations in the proof of Proposition \ref{P2} 
via differential invariants makes the formulae easier. For instance, the condition of
vanishing of both denominators of \eqref{wyeq} is 
 $$
w=\pm\frac1{2\sqrt{30}}e^{2\lambda}\sqrt{k_x^{-1}k_{xx}-8jk_x^{1/2}}.
 $$ 
This implies $w_y=0$ and substituting this into the numerator of \eqref{wyeq} yields $k_x=0$, i.e.\ constant curvature.

Similarly, the vanishing of $Eq_0$ yields $w=\pm\frac1{2\sqrt{15}}e^{2\lambda}\sqrt{8jk_x^{1/2}-k_x^{-1}k_{xx}}$
and substitution into other equations of $EQ_0$ yeilds expressions that cannot be zero simulteneously.
In other words, $\Phi\neq0$ unless $k=\op{const}$ on $M$.
Hence no nontrivial relative Killing vectors $R$ exist, unless $L\sim0$, in which case we get one Killing vector.
 \end{proof}

\section{Examples}\label{S4}

Here we demonstrate how to use the criterion for the existence of fractional-linear integral via vanishing of the invariant $\Phi$.

\smallskip

{\bf 1.} Consider at first the metric $g=e^{2x}(J_0(y)^2+J_1^2(y))\cdot(dx^2+dy^2)$ derived in \cite{AS},
where $J_i(y)$ are Bessel functions of the first kind, i.e.\ solutions of the differential equations
 $$
J_0'(y)=-J_1(y),\quad J_1'(y)=J_0(y)-\frac1yJ_1(y),  
 $$
with initial conditions $J_0(0)=1$, $J_1(0)=0$, $J_1'(0)=\frac12$. The corresponding Hamiltonian is 
 $$
H=\frac{e^{-2x}}2\frac{p^2+q^2}{J_0(y)^2+J_1(y)^2}. 
 $$
There are no Killing vectors. One can verify this directly by computing the Jacobian of the curvature $k$
and the square norm of its gradient $|\nabla k|^2_g$, which is a non-zero expression of fourth order.

The following fractional-linear integral was found in \cite{AS}:
 \begin{equation}\label{ex1}
F=\frac{P}{Q},\quad P = (xp+yq)J_1(y) - (yp-xq)J_0(y),\ Q = J_1(y)p + J_0(y)q.
 \end{equation}

Thus $P$ and $Q$ are relative Killing vectors, they satisfy the equation on $R$:
 \begin{equation}\label{ex1a}
\{H,R\}= \frac{e^{-2x}}{y(J_0^2(y) + J_1^2(y))^2}\Bigl(
\bigl(-y(J_0^2(y)+J_1^2(y))+J_0(y)J_1(y)\bigr)p + J_1^2(y)q \Bigr)R
 \end{equation} 
 
We will look for general relative Killing vector in the form (with gauge partially eliminated)
 \begin{equation}\label{ex1R}
R=u(x,y)p+v(x,y)q,\quad L=\frac{e^{-2x}}{J_0^2(y) + J_1^2(y)}a(x,y)p.
 \end{equation}
System \eqref{qqq} becomes
 $$
u_x = -(a+1)u +\frac{J_1^2(y)}{y(J_0^2(y)+J_1^2(y)}v,\quad
v_x+u_y = -av,\quad 
v_y = -u +\frac{J_1^2(y)}{y(J_0^2(y)+J_1^2(y)}v 
 $$
We omit expressions for $EQ_2$ and $EQ_1$ from the general theory above, also passing to $w=a_y$. 
The first expression of the first order $EQ_0$ factorizes as $\Psi_1\cdot\Psi_2$, where (we omit inessential terms $h_i$)
 \begin{align*}
\Psi_1 &= y^2\bigl(J_0^2(y)+J_1^2(y)\bigr)^2w+y\bigl(J_1^4(y)-J_0^4(y)\bigr)+2J_0^3(y)J_1(y),\\
\Psi_2 &= 25y^{10}\bigl(J_0^2(y)+J_1^2(y)\bigr)^{10}w^5+ 
h_4w^4+h_3w^3+h_2w^2+h_1w+h_0.
 \end{align*}
Prolonging $\Psi_2$, i.e.\ taking $D_x\Psi_2$, $D_y\Psi_2$ and evaluating them on $EQ_1$ we get 
three polynomials in $w$ with coefficients being functions of $x$ and $y$ (through exponential in $x$ and Bessel in $y$). 
Each of the two new polynomials contain $\Psi_1$ as a factor, but the remaining parts are incompatible: the three
polynomials have no common root $w$ (direct verification by computing the resolvent).

From $\Psi_1=0$ we find
 $$
a_y=w= \frac{y\bigl(J_0^4(y)-J_1^4(y)\bigr)-2J_0^3(y)J_1(y)}{y^2\bigl(J_0^2(y)+J_1^2(y)\bigr)^2}
 $$
and by integrating 
 $$
a=\frac{J_0(y)J_1(y)}{y\bigl(J_0^2(y)+J_1^2(y)\bigr)}.
 $$
Solving equation \eqref{ex1R} we get two independent solutions 
 $$
\frac{(xp+yq)J_1(y) - (yp-xq)J_0(y)}{e^x\sqrt{J_0^2(y)+J_1^2(y)}},\quad
\frac{J_1(y)p + J_0(y)q}{e^x\sqrt{J_0^2(y)+J_1^2(y)}}
 $$
and clearing denominators by gauge we recover the initial relative Killing vectors.
Consequently, up to M\"obius transformations, the fractional rational integral is unique.
 
\medskip 

{\bf 2.} Consider the Hamiltonias (they also correspond to geodesic flows) 
 $$
H_1=\frac{p^2+q^2}{x^2+y^2+b},\quad
H_2=\frac{p^2+q^2}{x^4+y^4+b},
 $$
where $b$ is a real parameter (may be zero).
 
The first Hamiltonian admits 1 linear integral and 4 quadratic integrals, but it does not admit a fractional-linear integral.
This can be obtained through the mentioned criterion, but also follows from Theorem \ref{TKV}.

The second Hamiltonian does not admit Killing vectors and it has only two Killing 2-tensors counting the Hamiltonian
(as it corresponds to a Liouville metric).
Here we compute the existence of fractional-linear integral by the method of the previous section. 
Going through the computation we obtain that the invariant $\Phi$ is nonzero, hence no fractional-linear integrals.

Let us note that the following examples
 $$
H_1^\epsilon=\frac{p^2+q^2}{x^2+\epsilon y^2},\quad
H_2^\epsilon=\frac{p^2+q^2}{x^4+\epsilon y^4},
 $$
are deformations of $H_1,H_2$ (with $b=0$), and the same computation shows that for small parameter $\epsilon$
(and also a perturbation of those by $b$) the systems still do not admit fractional-linear integrals.
However in \cite{AS} a fractional-linear integral for each Hamiltonian was obtained when $\epsilon=4$.
Irreducibility of those through Killing tensors is further discussed in \cite{Kr2}.

\section{Outlook}\label{S5}

Rational integrals were already discussed by Darboux \cite{Da}. 
Their relation with relative (or generalized) Killing tensors was discussed in
\cite{AHT} and further in \cite{Kr2}. For systems polynomial in all variables the latter also appeared in \cite{MP}
under the name of Darboux polynomials.

In this note we discussed a particular kind of those: fractional-linear integrals
and corresponding relative Killing vectors. We obtained certain results on the behavior of moduli spaces of
such integrals on surfaces. Let us mention some important open problems. 

\smallskip

Q1: The bound 6 in Corollary \ref{Cor1} of Proposition \ref{P2} seems not to be sharp, at least no example realizing it is known.
What is the actual bound? Is it true that for $(M,g)$ of nonconstant curvature the moduli space is connected, i.e.\
all rational integrals are equivalent with respect to the M\"obius group $\op{PGL}(2,\R)$? 
(Equivalently, are all cofactors $L$ cohomologous and the frational-linear integral essentially unique?)

\smallskip

Q2: Example 1 of Section \ref{S4} is irreducible in the sense that the fractional-linear integral considered there
is not a ratio of (absolute) Killing vectors. But one cannot exclude that it is not a ratio of higher degree Killing tensors.
For the above Hamiltonins $H_1$ with $b=0$ such an effect was shown in \cite{Kr2}. 
Can one demonstrate such general irreducibility for this or others examples from \cite{AS}? 
While one expects the positive answer, a functional dimension count as in \cite{Ko} cannot guarantee this.
However topological arguments, namely non-resonanse behavior of the trajectories, will be sufficient.

\smallskip

Q3: Integrable geodesic flows can exist only on surfaces of small genus. For orientable $M$ these are $\mathbb{S}^2$
and $\mathbb{T}^2$. However metrics with higher degree Killing tensors are known only on the sphere, and
it is conjectured that they cannot exist on the torus, see \cite{BF} for a discussion. What about rational integrals: can
they be realized for a nonconstant curvature metric on these surfaces?

\smallskip

These questions are also interesting in higher dimensions, but are more challenging there.

 \bigskip

{\bf Acknowledgment.}
I thank Vladimir Matveev for helpful comments.
The work was initiated when the author visited Brasil by the invitation of Sergey Agafonov,
supported by his grant FAPESP 2022/12813-5. I am grateful for hospitality of IMPA at Rio de Janeiro
and UNESP at campus S\~ao Jos\'e do Rio Preto.

The research leading to our results was partially supported by the Tromsø Research Foundation 
(project “Pure Mathematics in Norway”) and the UiT Aurora project MASCOT.

\end{document}